\newtheorem{defn}{Definition}[section]
\newtheorem{lemma}[defn]{Lemma}
\newtheorem{thm}[defn]{Theorem}
\newcommand{\sbigotimes}{%
  \mathop{\mathchoice{\textstyle\bigotimes}{\bigotimes}{\bigotimes}{\bigotimes}}%
}
\title{Low-rank solutions to the stochastic Helmholtz equation}%
\author{Adem Kaya, Melina A. Freitag, \thanks{Institut f\"{u}r Mathematik, Universit\"{a}t Potsdam, 
Campus Golm, Haus 9, Karl-Liebknecht-Str. 24-25, D-14476 Potsdam, Germany ({\tt
  {kaya,melina.freitag}@uni-potsdam.de})}}
  \date{}
\begin{document}
\maketitle

\begin{abstract}    
In this paper, we consider  low-rank approximations for the solutions to the stochastic Helmholtz equation with random coefficients. A Stochastic Galerkin finite element method is used for the discretization of the Helmholtz problem.   Existence theory for the low-rank approximation is established when the system matrix is indefinite.  The low-rank algorithm  does not require the construction of a large system matrix which results in an advantage in terms of CPU time and storage.  Numerical results show that, when the operations in a low-rank method are performed efficiently, it is possible to obtain an advantage in terms of storage and CPU time compared to computations in full rank. We also propose a general approach to implement a preconditioner using the low-rank format efficiently. 
  
\end{abstract}

\noindent\textbf{Keywords:} Stochastic Helmholtz problem, low-rank approximations, Galerkin method, preconditioner 

\noindent\textbf{AMS subject classification:}  65N22, 65M60, 65F10, 35R60 (65N06, 65N30)


\section{ Introduction}

Stochastic partial differential equations (SPDE's) are usually concerned with stochastic processes.  
There are three competing methods
 in the literature to solve PDE's with random coefficients:  Monte Carlo method (MCM) \cite{MR2401592,MR2835612},  stochastic collocation methods (SCM) \cite{MR2318799} and  stochastic Galerkin finite element methods (SGFEM) \cite{MR1083354, MR3308418}.
 Different models  and solution methods  for the stochastic Helmholtz equation have been considered in literature \cite{MR2116603,MR4191625,MR2444727,MR2541285}. In \cite{MR2116603}, the stochastic Helmholtz equation with random source function was considered and a multigrid algorithm was applied to a linear system with multiple right-hand sides. In \cite{MR4191625}, an inverse random source problem was considered for the one-dimensional stochastic Helmholtz equation. A Stochastic Helmholtz equation driven by
white noise forcing term in two- and three-dimensions was investigated in \cite{MR2444727}.   In \cite{MR2541285}, stochastic Helmholtz equation with random coefficients was considered, and a parallel Schwarz type domain decomposition preconditioned
recycling Krylov subspace method was applied.

 Solving  (\ref{eqn:stocmain}) is challenging even for  the deterministic  problem. The randomness adds to the difficulty.   When stochastic Galerkin finite element method (SGFEM) is used for the discretization, the randomness in the solution and problem parameters in (\ref{eqn:stocmain}), leads to large linear system of equations. Moreover,  large wave numbers lead to larger linear systems when a conventional discretization scheme is used for the system of deterministic Helmholtz equations due to the pollution error. This requires more memory and CPU time. Additionally, the discretization matrix is indefinite and  standard iterative methods suffer from slow convergence or may fail \cite{MR3050918}.                

In this article, we discretize the Helmholtz problem  (\ref{eqn:stocmain}) with a stochastic Galerkin finite element method combined with a nonstandard finite element  method \cite{adem} to mitigate the pollution error. The resulting system matrix   is very large, sparse and indefinite. We use a preconditioned low-rank Krylov method so that there is no need to store the large system matrix. A candidate is the precondioned low-rank BICG (P-LRBICG) method. Low-rank approaches have been considered for different kinds of stochastic PDEs and with different types of Krylov solvers  \cite{8406fb78f,MR3716585,MR2914549,MR3376791,MR4073359,MR4025775,ciloglu2020stochastic}.

Often, the most time consuming step in a plain preconditioned Krylov method is the implementation of the preconditioner. Matrix vector multiplication, vector vector addition and inner product are relatively cheap in comparison with the implementation of a preconditioner, in particular when dealing with sparse matrices. On the other hand, as we will see, in a low-rank approach, truncation operator, matrix vector multiplication and  trace operator (inner product) may not be so cheap. In order to gain advantage in terms of CPU time, they must be applied appropriately depending on the problem. In this article, we propose a general approach using a low-rank format to implement the preconditioner efficiently for the discretization of the  stochastic PDEs obtained by SGFEM. We also redefine matrix vector multiplication so that it is still cheap enough when the rank is not small.

  The rest of the paper is organized as follows: We introduce the SGFEM in the next section. Existence theory for the low-rank solution for indefinite systems is discussed in Section \ref{sec:existence}.  We introduce the low-rank algorithm and implementation of the preconditioner for the stochastic Helmholtz equation in Section \ref{sec:lowrank}. We provide numerical experiments  which support our theory in Section \ref{sec:numerical} and finish with concluding remarks in Section \ref{sec:conclusion}.

\section{Stochastic Galerkin finite element method (SGFEM)} 
 \label{sec:sgfem}
 
In this paper, we consider the  stochastic Helmholtz equation with random coefficients
\begin{equation}
\left.
\begin{aligned}
-\nabla.(\alpha(\textbf{x},\omega) \nabla u(\textbf{x},\omega))  - c^2 \alpha(\textbf{x},\omega) u(\textbf{x},\omega) &= f(\textbf{x}), \quad \text{in} \ D \times \Omega, \\
u(\textbf{x},\omega) &= g(\textbf{x}), \quad  \textbf{x} \in \partial D_D,\  \omega \in \Omega,\\
\partial_{\nu}u(\textbf{x},\omega) + icu(\textbf{x},\omega) &= 0,  \quad  \textbf{x} \in \partial D_R, \  \omega \in \Omega,
\end{aligned}
 \right\}
 \label{eqn:stocmain}   
\end{equation}
where  $c \in \mathbb{R} $ is the wavenumber,   $\textbf{x} \in D$  is the spatial variable, $f:D\rightarrow \mathbb{R}$ is the source term and $g:\partial D_D \rightarrow \mathbb{R}$ is a given function for the Dirichlet boundary condition. In the  above model, $\alpha(\textbf{x}, \omega)$,
and hence the solution $u(\textbf{x}, \omega)$ are random fields.  We use the same coefficient  $\alpha(\textbf{x}, \omega)$ inside and outside of the divergence so that one can get indefinite linear systems easily for large wavenumbers. We note that such a kind of assumption might move away the problem in (\ref{eqn:stocmain}) from real life applications but the model problem in (\ref{eqn:stocmain}) is useful to explain the techniques in this paper. $\Omega$ is the sample space of events and  $D \subset \mathbb{R}^2$ is a convex bounded polygonal domain where Dirichlet boundary conditions are imposed on $\partial D_D$ and   Robin boundary conditions are imposed on $\partial D_R$. Here,
$\nu$ denotes the unit outward normal to $\partial D$, and $\partial_\nu$ stands for the normal derivative
of $u$. We assume that the random field $\alpha$ is $\mathcal{P}$-almost surely uniformly positive; that is, 
\begin{equation*}
 \exists \beta_1, \beta_2 \quad \text{such that} \quad 0<\beta_1 \leq \beta_2 <+\infty
\end{equation*}
with 
\begin{equation}
 \beta_1 \leq \alpha(\textbf{x}, \omega)  \leq \beta_2, \quad \text{a.e. in} \quad  D \times \Omega.
\label{eqn:positivity}  
\end{equation}
Existence and uniquness of the solution for the stochastic  Helmholtz problem were discussed in \cite{MR4049407}.

We consider the SGFEM for the discretization of (\ref{eqn:stocmain}). The 
SGFEM is widely used for the discretization of the partial differential equations with random coefficients \cite{MR3376791, MR3308418,MR2491431,MR2397299}. It basically consists of  four steps; first of all, the
randomness in the model is  represented with a finite number of random
variables. Then, the random
and spatial dependencies in the random field $\alpha(\textbf{x},\omega)$ are decoupled using the  Karhunen-L\`{o}eve expansion (KLE). In the third stage,  the solution is approximated by a finite-term expansion using a basis of orthogonal polynomials, the so-called generalized polynomial chaos expansion (PCE). Finally, we perform a
Galerkin projection on the set of polynomial basis functions. After applying this procedure, we obtain a coupled system of deterministic Helmholtz equation. For its discretization, we use a stabilized finite element method as proposed in \cite{adem}.  

\subsection{Karhunen-L\`{o}eve expansion (KLE)}

 Let $\alpha:D\times \Omega \rightarrow \mathbb{R}$ be a random field with continuous covariance function $C_{\alpha}(\textbf{x},\textbf{y})$. 
 Then $\alpha$ admits a proper orthogonal decomposition
 \begin{equation}
  \alpha(\textbf{x},\omega) = \bar{\alpha}(\textbf{x}) + \sigma_\alpha \sum_{i=1}^{\infty}\sqrt{\lambda_i}\varphi_i(\textbf{x})\xi_i(\omega), 
 \label{eqn:klei}
 \end{equation}
where $\sigma_\alpha$ is the standard deviation for $\alpha(\textbf{x})$ and $\bar{\alpha}(\textbf{x})$ is the mean.  The random varibles $\xi_i$ are given by 
\begin{equation*}
 \xi_i(\omega) = \frac{1}{\sigma_\alpha \sqrt{\lambda_i}}\int_{D}(\alpha(\textbf{x},\omega)-\bar{\alpha}(\textbf{x})) \varphi_i(\textbf{x})d\textbf{x},
\end{equation*}
and $\{\lambda_i, \varphi_i \}$ is the set of eigenvalues and eigenfunctions of $C_\alpha$ with 
\begin{equation*}
 \int_{D} C_{\alpha}(\textbf{x},\textbf{y})\varphi_i(\textbf{y})d\textbf{y}= \lambda_i \varphi_i(\textbf{x}).
\end{equation*}
In (\ref{eqn:klei}), the eigenvalues,  
${\lambda_i }\rightarrow 0$ as $i \rightarrow \infty$. In practice,
the series is truncated  based on the  decay of the
eigenvalues, and $\alpha$ is approximated by $\alpha_N$  \cite{MR3308418, MR3376791}: 
\begin{equation}
  \alpha_N(\textbf{x},\omega) = \bar{\alpha}(\textbf{x}) + \sigma_\alpha \sum_{i=1}^{N}\sqrt{\lambda_i}\varphi_i(\textbf{x})\xi_i(\omega). 
\label{eqn:kletruncated}
\end{equation}
We have to ensure that $\alpha_N$ satisfies the positivity condition (\ref{eqn:positivity}). 
  For some random inputs, the covariance  functions and eigenpairs  can be computed
explicitly and the positivity condition is satisfied \cite{MR3308418}, otherwise, they can be approximated numerically \cite{MR2105161}.

 \subsection{Generalized polynomial chaos expansion}

For a random field $u\in L^2(\Omega, \mathcal{F}, \mathcal{P})$, we have the expansion 
\begin{equation}
 u(\textbf{x},\omega) = \sum_{j=0}^{\infty} u_j(\textbf{x}) \psi_j(\boldsymbol{\xi}(\omega))
 \label{eqn:infinitesum}
\end{equation}
where $u_j$ are the deterministic modes given by
\begin{equation*}
 u_j(\textbf{x}) = \frac{\langle u(\textbf{x},\omega)\psi_j(\boldsymbol{\xi}(\omega))  \rangle}{\langle \psi_j^2(\boldsymbol{\xi}(\omega)) \rangle},
\end{equation*}
 where  \boldsymbol{$\xi$}$=[ \xi_1,...,\xi_N ]^T$   is a finite-dimensional random vector, $\psi_j$ are multivariate orthogonal
polynomials, $\langle \psi_0(\boldsymbol{\xi}) \rangle=1$,  $\langle \psi_j(\boldsymbol{\xi}) \rangle=0$, $j>0$, $\langle \psi_j(\boldsymbol{\xi})\psi_k(\boldsymbol{\xi}) \rangle=\delta_{jk}$ with
\begin{equation*}
 \langle \psi_j(\boldsymbol{\xi})\psi_k(\boldsymbol{\xi}) \rangle = \int_{\boldsymbol{\xi}\in \Pi} \psi_j(\boldsymbol{\xi})\psi_k(\boldsymbol{\xi})\rho(\boldsymbol{\xi})d\boldsymbol{\xi}.
\end{equation*}
Here $\Pi$ and $\rho$ are the support and probability density of $\xi$, respectively.
By the Cameron-Martin Theorem, the series  converges in the Hilbert space $ L^2(\Omega, \mathcal{F}, \mathcal{P})$, see e.g. \cite{MR2855645}. Thus we truncate (\ref{eqn:infinitesum}),
\begin{equation}
 u(\textbf{x},\omega) = \sum_{j=0}^{P} u_j(\textbf{x}) \psi_j(\boldsymbol{\xi}(\omega)).
\label{eqn:chaostruncated}
\end{equation}
$P$ is determined by the expression
\begin{equation*}
 P = \frac{(N+Q)!}{N!Q!},
\end{equation*}
where $Q$ is the highest degree of the orthogonal polynomial used to represent $u$, and $N$ is used for the approximation of $\alpha$ in (\ref{eqn:kletruncated}).

 \subsection{Stochastic Galerkin approach}

Substituting    the truncated KLE (\ref{eqn:kletruncated}) and truncated generalized solution (\ref{eqn:chaostruncated}) into the model problem (\ref{eqn:stocmain}) gives 
\begin{equation}
-\sum_{j=0}^{P} \nabla .\left(  \left(\bar{\alpha}(\textbf{x}) + \sigma_\alpha \sum_{i=1}^{N}\sqrt{\lambda_i}\varphi_i(\textbf{x})\xi_i(\omega)  \right)\nabla u_j(\textbf{x})\psi_j  \right) - c^2\sum_{j=0}^{P} \left(\bar{\alpha}(\textbf{x}) + \sigma_\alpha \sum_{i=1}^{N}\sqrt{\lambda_i}\varphi_i(\textbf{x})\xi_i(\omega)  \right)u_j(\textbf{x})\psi_j = f(\textbf{x}).
\label{eqn:discrete1}
\end{equation}
\normalsize
We then project (\ref{eqn:discrete1})   onto the space  spanned by the $P+1$ polynomial chaos basis functions to get
\begin{equation}
 -\sum_{j=0}^{P}\nabla .\left( a_{ji}\nabla u_j(\textbf{x})     \right) - c^2 \sum_{j=0}^{P} a_{ji} u_j(\textbf{x})  = \langle \psi_i  \rangle f(\textbf{x}), \quad \text{for}\ \ i=0,...,P,
 \label{eqn:discrete2}
\end{equation} 
\normalsize
where
\begin{equation}
     a_{ji} = \bar{\alpha}(\textbf{x})\delta_{ji} + \sigma_{\alpha} \sum_{k=1}^{N}  \sqrt{\lambda_k}\varphi_k(\textbf{x}) \langle \xi_k \psi_j \psi_i  \rangle.
      \label{eqn:discrete3}
\end{equation}
Note that  (\ref{eqn:discrete2}) consists of $P+1$ deterministic  Helmholtz equation.

\subsection{Basic notations}
Spacial discretization of (\ref{eqn:discrete2}) leads to a linear system that can be represented using Kronecker product and vec$(.)$ operator. Let $G = [g_1,...,g_m] \in \mathbb{R}^{n\times m}$ and $K \in \mathbb{R}^{p\times q}$. Then
\begin{equation*}
G \otimes K = 
\begin{pmatrix}
g_{11}K & g_{12}K & \cdots & g_{1m}K \\
g_{21}K & g_{22}K & \cdots & g_{2m}K \\
\vdots  & \vdots  & \ddots & \vdots  \\
g_{n1}K & g_{n2}K & \cdots & g_{nm}K 
\end{pmatrix}
\in \mathbb{R}^{np\times mq} \quad \text{and} \quad  
\text{vec}(G) = 
\begin{pmatrix}
g_{1} \\
g_{2} \\
\vdots \\
g_{m}  
\end{pmatrix}
\in \mathbb{R}^{nm\times 1}.
\end{equation*} 
vec$(.)$ can be considered as a vector isomophism $\text{vec}(.)\ : \ \mathbb{R}^{n \times m} \rightarrow \mathbb{R}^{nm}$    and its inverse is denoted by vec$(.)^{-1}\ : \ \mathbb{R}^{nm} \rightarrow \mathbb{R}^{n \times m}$.

In MATLAB notation: $\text{vec}(G)=\text{reshape}(G,[n*m,1])$ and
$G=\text{vec}^{-1}(\text{vec}(G)) = \text{reshape}(\text{vec}(G),[n,m])$.
The following properties  hold 
\begin{equation*}
 \text{vec}(AXB) = (B^T\otimes A)\text{vec}(B)
\end{equation*}
and 
\begin{equation}
 (A\otimes B)(C\otimes D) = AC \otimes BD.
 \label{eqn:kroncarp}
\end{equation}

\subsection{Spatial discretization}

Discretization of (\ref{eqn:discrete1}) with a standard method leads to system of  ill-conditioned indefinite matrices  for large $ch$ \cite{KAYA2022171}. Furthermore, standard discretization schemes such as the standard finite difference or finite element methods,  suffer from the pollution effectcfor large $c$ \cite{MR1480387}. The simple way to eliminate the pollution error is to choose $c^2h<1$. However, this leads to intractable matrix sizes, particularly for the model problem (\ref{eqn:stocmain}).  There are different methods in the literature which are designed to reduce the pollution error such as high-order finite element methods \cite{doi:10.1137/S0036142903423460, MR1445739, MR2812565, MR4092293}, multi-scale methods \cite{MR4079241, MR3614010}, Trefftz methods \cite{MR1618464, MR1870426,MR1692914},     stabilized methods \cite{MR1475348, https://doi.org/10.1002/nme.1620380303, adem} and others \cite{MR3809250, MR38092508888}.   We use the stabilized finite element method proposed in  \cite{adem} to  reduce the pollution error.  Using a     nonstandard method that is successful in reducing the pollution effect, reduces the size of the resulting system matrix for the model problem (\ref{eqn:stocmain}), significantly. For more details about the construction of the system matrix, we refer to \cite{adem, MR3308418}. 

Although a nonstandard method may reduce the pollution error, the system matrix generally remains indefinite for large wave numbers. It is well known that   standard iterative solvers are ineffective in obtaining  solutions of the  discrete Helmholtz equation \cite{MR3050918}. We will show that this will be also the case for the discrete model problem (\ref{eqn:stocmain}) for large wave numbers.

We assume that each of the deterministic coefficients $u_j$ , $j =0,1,. . . , P$, in (\ref{eqn:discrete2}) is discretized on the same mesh and with equal number of elements. More precisely, each $u_i$ is approximated  as a linear
combination of the form 
\begin{equation*}
 u(\textbf{x}) \approx  \sum_{i=1}^{J}u_{ji}s_i(\textbf{x}), \quad j=0,...,P.
\end{equation*}
with $J$  basis functions $s_i(\textbf{x})$ such that $s_i(\textbf{x}) = s_L^i + s_B^i$ where $s_L^i$ are the linear finite element basis functions and $ s_B^i$ are the bubble functions defined in \cite{adem} for the stabilization of the method.

After applying the spacial discretization to (\ref{eqn:discrete2}) with (\ref{eqn:discrete3}), we get  fully discretized form of (\ref{eqn:stocmain}) which is equivalent to the following linear system  
\begin{equation}
 \mathcal{A}\textbf{u}=\textbf{b},
\label{eqn:mainsystem}
\end{equation}
where 
\begin{equation}
 \mathcal{A} = \sum_{i=0}^{N} G_i \otimes K_i
\label{eqn:mainmatrix}
\end{equation}
and 
\begin{equation*}
 \textbf{b} = \textbf{g}_0 \otimes \textbf{f}_0 - \textbf{g}_0 \otimes K_{0,B}^T \textbf{w}_B(x).
\end{equation*}
The stochastics matrices  $G_i \in \mathbb{R}^{(P+1)\times(P+1)}$ are given by
\begin{equation*}
 G_0(j,k) = \langle \psi_j(\xi) \psi_k(\xi)   \rangle, \quad G_i(j,k) = \langle \xi_i \psi_j(\xi) \psi_k(\xi), \quad \text{for}\quad i=1,...,N. 
\end{equation*}
The stiffness matrices $K_i \in \mathbb{R}^{J\times J}$, for $i=1,...,N$ are given by
\begin{equation*}
 K_0(j,k) = \int_D \bar{\alpha}(\textbf{x})\nabla s_j(\textbf{x}) \nabla s_k(\textbf{x}) d\textbf{x} - c^2 \int_D \bar{\alpha}(\textbf{x}) s_j(\textbf{x}) s_k(\textbf{x}) d\textbf{x},
\end{equation*}
\begin{equation*}
 K_i(j,k) =\sigma_{\alpha}\sqrt{\lambda_i} \int_D \varphi_i(\textbf{x})\nabla s_j(\textbf{x}) \nabla s_k(\textbf{x}) d\textbf{x} - c^2 \sigma_{\alpha}\sqrt{\lambda_i} \int_D \varphi_i(\textbf{x}) s_j(\textbf{x}) s_k(\textbf{x}) d\textbf{x}
\end{equation*}
 and the vectors  $\textbf{g}_0$ and $\textbf{f}_0$ are defined by 
\begin{equation*}
 \textbf{g}_0(j) = \langle \psi_j(\xi) \rangle, \qquad \textbf{f}_0(j) = \int_{D} f(\textbf{x}) s_j(\textbf{x}) d\textbf{x}. 
\end{equation*}
$K_{0,B} \in \mathbb{R}^{j_b \times J}$ is  a finite element  matrix that accounts for
the coupling between interior and boundary degrees of freedom and $\textbf{w}_B \in \mathbb{R}^{j_b}$ contains the boundary data.

\section{Existence of low-rank solutions for indefinite linear systems } \label{sec:existence}
Our  aim in this article is to apply a low-rank algorithm to the system (\ref{eqn:mainsystem}) which has kronecker structure. To this end, we discuss existence  of a low-rank solution. Existence theory for general positive definite matrices  is considered in  \cite{MR2081823} and for discrete stochastic problems in \cite{MR3376791}. We first establish, under certain assumptions existence theory for a low-rank inverse for a general indefinite matrix  following the theory in \cite{MR2081823}.

\begin{lemma} \label{lemma:inverse} Let $G \in \mathbb{C}^{n \times n}$. If the spectrum of $G$ is  contained in the upper half complex  plane, i.e., 
\begin{equation*}
 \sigma(G) \subset \mathbb{C}_{+_i} := \{ x+iy \in \mathbb{C}\ | \ y>0  \},
\end{equation*}
then the inverse of $G$ is
\begin{equation}
 G^{-1} = -i\int_{0}^{\infty} e^{iGt}dt.
\end{equation}
\end{lemma}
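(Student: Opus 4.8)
The plan is to reduce the claim to the standard fact that a matrix whose spectrum lies in the open left half plane generates a matrix semigroup that decays exponentially, so that the improper matrix integral converges and can be evaluated by an elementary antiderivative.

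First I would record two immediate consequences of the hypothesis $\sigma(G)\subset\mathbb{C}_{+_i}$. Since $0\notin\sigma(G)$, the matrix $G$ is invertible. Moreover the map $z\mapsto iz$ sends $\mathbb{C}_{+_i}$ into the open left half plane $\{\,z\in\mathbb{C}\mid\Re z<0\,\}$, because $i(x+iy)=-y+ix$ has real part $-y<0$. Hence $\sigma(iG)=i\,\sigma(G)$ lies in the open left half plane; in particular $iG$ is invertible, with $(iG)^{-1}=-iG^{-1}$ (using $1/i=-i$).

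Next I would establish exponential decay of $t\mapsto e^{iGt}$. Put $\alpha:=\max_{\lambda\in\sigma(iG)}\Re\lambda<0$. Using a Jordan (or Schur) decomposition $iG=S(\Lambda+N)S^{-1}$ one has $e^{iGt}=S\,e^{(\Lambda+N)t}\,S^{-1}$, and the entries of $e^{(\Lambda+N)t}$ are finite sums of terms $t^{k}e^{\lambda t}$ with $\lambda\in\sigma(iG)$ and $0\le k\le n-1$. Consequently there is a constant $C$ depending only on $G$ with $\|e^{iGt}\|\le C\,(1+t^{\,n-1})\,e^{\alpha t}$ for all $t\ge0$. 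Since $\alpha<0$, this bound is integrable on $[0,\infty)$, so the improper integral $\int_{0}^{\infty}e^{iGt}\,dt$ converges absolutely (entrywise) and $e^{iGT}\to 0$ as $T\to\infty$.

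Finally I would evaluate the integral. Since $\frac{d}{dt}\bigl((iG)^{-1}e^{iGt}\bigr)=(iG)^{-1}(iG)e^{iGt}=e^{iGt}$, the function $(iG)^{-1}e^{iGt}$ is an antiderivative of $e^{iGt}$, and the fundamental theorem of calculus (applied entrywise, valid by the decay just shown) gives
\[
\int_{0}^{\infty}e^{iGt}\,dt=\lim_{T\to\infty}(iG)^{-1}\bigl(e^{iGT}-I\bigr)=(iG)^{-1}(0-I)=-(iG)^{-1}=iG^{-1}.
\]
Multiplying by $-i$ yields $-i\int_{0}^{\infty}e^{iGt}\,dt=-i\cdot iG^{-1}=G^{-1}$, which is the assertion. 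I expect the only genuinely non-routine point to be the justification of convergence, i.e.\ the exponential bound on $\|e^{iGt}\|$: one must absorb the possibly non-diagonalizable part of $iG$, which is exactly what the polynomial factor $1+t^{\,n-1}$ does; alternatively one may invoke that for every $\varepsilon>0$ there is a vector norm in which $\|e^{iGt}\|\le e^{(\alpha+\varepsilon)t}$ and choose $\varepsilon<|\alpha|$. Everything else is elementary.
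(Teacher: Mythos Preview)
Your proof is correct and follows essentially the same approach as the paper: both verify the identity by recognising $e^{iGt}$ as the derivative (up to a factor $(iG)^{-1}$) of its own antiderivative and applying the fundamental theorem of calculus. The paper's proof is the one-line computation $G\bigl(-i\int_{0}^{\infty}e^{iGt}\,dt\bigr)=-\tfrac{i}{i}\int_{0}^{\infty}\tfrac{\partial}{\partial t}e^{iGt}\,dt=e^{0G}=I$ and does not spell out the convergence of the improper integral; your additional work establishing the exponential decay $\|e^{iGt}\|\le C(1+t^{n-1})e^{\alpha t}$ with $\alpha<0$ is exactly the justification that makes this computation rigorous.
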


\begin{proof}  
\begin{equation*}
 G\left(-i\int_{0}^{\infty} e^{iGt}dt  \right) = -\frac{i}{i}  \int_{0}^{\infty} \frac{\partial}{\partial t}\left( e^{iGt} \right) dt = e^{0G} = I. 
\end{equation*}
\end{proof}

\begin{lemma} Let $A$  possess  the tensor structure
\begin{equation}
 A = \sum_{j=1}^d \hat{A_j}, \qquad \hat{A_j}= \underbrace{I\otimes ...\otimes I}_{j-1 \ \text{times}}\otimes A_j \otimes \underbrace{I\otimes ...\otimes I}_{d-j \ \text{times}}, \qquad A_i \in \mathbb{C}^{n\times n}. 
 \label{eqn:matrixtensor}
\end{equation}
If the  the spectrum of $A$  is contained in the upper half complex plane, then the
inverse of $A$ is
\begin{equation}
 A^{-1} = -i\int_{0}^{\infty} \sbigotimes_{j=1}^{d}  e^{iA_jt}dt.
\end{equation}
\end{lemma}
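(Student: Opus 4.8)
The plan is to reduce the statement to Lemma~\ref{lemma:inverse} applied to $G=A$, the only extra work being to recognize the tensor-product integrand as the matrix exponential $e^{iAt}$. First I would record that the summands $\hat{A_j}$ of (\ref{eqn:matrixtensor}) pairwise commute: for $j\neq\ell$, both $\hat{A_j}\hat{A_\ell}$ and $\hat{A_\ell}\hat{A_j}$ equal, by the mixed-product identity (\ref{eqn:kroncarp}), the same factorwise product $I\otimes\cdots\otimes A_j\otimes\cdots\otimes A_\ell\otimes\cdots\otimes I$ with $A_j$ in the $j$-th slot, $A_\ell$ in the $\ell$-th slot, and identities elsewhere. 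Since $A=it^{-1}\cdot it\sum_j\hat{A_j}$ is a sum of pairwise commuting matrices, $e^{iAt}=\prod_{j=1}^d e^{it\hat{A_j}}$, the product being order-independent.

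Next I would identify each individual factor. From $\hat{A_j}^{\,k}=I\otimes\cdots\otimes A_j^{\,k}\otimes\cdots\otimes I$ and termwise summation of the exponential series, $e^{it\hat{A_j}}=I\otimes\cdots\otimes e^{itA_j}\otimes\cdots\otimes I$, again with the nontrivial entry in the $j$-th slot. Multiplying these $d$ matrices and collapsing the Kronecker products by repeated use of (\ref{eqn:kroncarp}) gives
\begin{equation*}
 e^{iAt}=\prod_{j=1}^{d}\bigl(I\otimes\cdots\otimes e^{itA_j}\otimes\cdots\otimes I\bigr)=\sbigotimes_{j=1}^{d} e^{iA_jt}.
\end{equation*}

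Finally, since $\sigma(A)\subset\mathbb{C}_{+_i}$ by hypothesis, Lemma~\ref{lemma:inverse} with $G=A$ gives $A^{-1}=-i\int_0^{\infty}e^{iAt}\,dt$, and substituting the identity above yields the asserted formula. I do not expect a serious obstacle here; the only points needing a line of care are the termwise manipulation of the exponential series under the Kronecker product (justified by absolute convergence of the series and (\ref{eqn:kroncarp})) and the convergence of the improper integral, which is already subsumed in Lemma~\ref{lemma:inverse}, the open-upper-half-plane location of $\sigma(A)$ forcing $\|e^{iAt}\|$ to decay exponentially as $t\to\infty$. One might also note, for context, that $\sigma(A)\subset\mathbb{C}_{+_i}$ does not follow automatically from $\sigma(A_j)\subset\mathbb{C}_{+_i}$, so this condition genuinely has to be assumed (as it is).
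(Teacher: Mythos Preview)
Your proposal is correct and follows precisely the route the paper indicates: the paper's proof simply invokes Lemma~\ref{lemma:inverse} and points to \cite[Lemma~2]{MR2081823} for the identification $e^{iAt}=\sbigotimes_{j=1}^{d}e^{iA_jt}$, which is exactly the commutativity-and-mixed-product argument you spell out. In effect you have written out the details the paper leaves to the cited reference.
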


\begin{proof}
\proof Making use of  Lemma \ref{lemma:inverse} and  following the steps in (\cite{MR2081823}, Lemma 2) yields the result.
\end{proof}

One can also    consider   $iAx=ib$ for which the eigenvalues of $A$ are rotated in the complex plane. If the eigenvalues of $A$ are in the upper half complex plane, then the eigenvalues of $iA$ are in in the left half complex plane so that we can apply the  theory proposed in  \cite{MR2081823} for $iA$.

\begin{lemma} \label{lem:lem3} Let  $A$ be a matrix of the tensor structure (\ref{eqn:matrixtensor}) with spectrum $\sigma(A)$ contained in the strip  $\Lambda:= [-\lambda, \lambda]\otimes i[\mu_{\text{min}},\mu_{\text{max}}] \subseteq \mathbb{C}_{+_i}$. Let $b$ possesses the  tensor vector structure
\begin{equation*}
 b = \sbigotimes_{i=1}^{d}b_i, \qquad b_i\in \mathbb{C}^n, \quad b_{(j_1,...j_d)} = \prod_{i=1}^d (b_j)j_i \quad \text{for} \ j=\{1,...d\}^d
\end{equation*}
and $k\in \mathbb{N}$. Then the solution
$x$ to $Ax= b$ can be approximated by
\begin{equation}
\tilde{x} : = -\sum_{j=-k}^{k} \frac{2w_j}{\mu_{\text{min}}}  \sbigotimes_{i=1}^{d}  e^{\frac{2t_j}{\mu_{\text{min}}}A_i}b_i
\end{equation}
with approximation error (in the Euclidean norm)
\begin{equation}
 ||x-\tilde{x}||_{2} = -\sum_{j=-k}^{k} \frac{C_{st}}{\pi \mu_{\text{min}}} e^{\frac{2\lambda \mu_{\text{min}}^{-1} +1 }{\pi} -\pi \sqrt{2 k}} \oint_{\Gamma} \left|\left| (\mu I- 2A/\mu_{\text{min}})^{-1} \right|\right|_{2}  d_{\Gamma}\mu \ ||b||_2,
 \label{eqn:lowrankerr}
\end{equation}
where 
\begin{equation*}
 h_{st} := \frac{\pi}{\sqrt{k}}, 
\end{equation*}
\begin{equation}
  t_j:= \log\left(e^{jh_{st}} + \sqrt{1+e^{2jh_{st}}}  \right),
\label{eqn:tj}
\end{equation}
\begin{equation}
 w_j:= \frac{h_{st}}{\sqrt{1+e^{-2jh_{st}}}},
 \label{eqn:wj}
\end{equation}
and $C_{st}$ is a constant independent of $k$. 
\end{lemma}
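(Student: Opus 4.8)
The plan is to start from the exact representation of the inverse established in the previous lemma, namely $A^{-1} = -i\int_0^\infty \sbigotimes_{j=1}^d e^{iA_j t}\,dt$, so that the exact solution is $x = -i\int_0^\infty \big(\sbigotimes_{j=1}^d e^{iA_j t}\big) b\,dt$. Using the tensor structure of $b$ together with the mixed-product property (\ref{eqn:kroncarp}), the integrand factorizes as $\sbigotimes_{i=1}^d \big(e^{iA_i t} b_i\big)$, which already exhibits the rank-one (in the tensor sense) structure we want to preserve. So the task reduces to approximating a scalar-type integral $\int_0^\infty (\cdots)\,dt$ by a finite quadrature rule while controlling the error in the Euclidean norm.

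Next I would substitute a variable change that maps $[0,\infty)$ to $\mathbb{R}$ and renders the integrand suited to the sinc/double-exponential (Stenger-type) quadrature: the parameters $t_j$ and $w_j$ in (\ref{eqn:tj})--(\ref{eqn:wj}) are precisely the nodes and weights of such a quadrature after rescaling by $2/\mu_{\text{min}}$, which is the natural scaling so that the rotated/shifted operator $2A/\mu_{\text{min}}$ has spectrum positioned for the contour estimate. Applying the quadrature with $2k+1$ nodes produces exactly the claimed approximant $\tilde x = -\sum_{j=-k}^k \frac{2w_j}{\mu_{\text{min}}} \sbigotimes_{i=1}^d e^{\frac{2t_j}{\mu_{\text{min}}} A_i} b_i$. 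The error of sinc quadrature applied to a function analytic in a strip decays like $e^{-\pi\sqrt{2k}}$ (this is where $h_{st} = \pi/\sqrt k$ comes from, balancing the truncation and discretization errors), and the prefactor involves the width of analyticity, which is governed by $\lambda$ and $\mu_{\text{min}}$; this accounts for the $e^{(2\lambda\mu_{\text{min}}^{-1}+1)/\pi - \pi\sqrt{2k}}$ factor in (\ref{eqn:lowrankerr}).

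To make the error estimate rigorous I would represent $e^{zA_i}$ (and hence the integrand) via a Cauchy/Dunford integral over a contour $\Gamma$ enclosing the scaled spectrum, $e^{zA/\mu} = \frac{1}{2\pi i}\oint_\Gamma e^{z\mu'}(\mu' I - 2A/\mu_{\text{min}})^{-1}\,d\mu'$; this is exactly what introduces the resolvent term $\big\|(\mu I - 2A/\mu_{\text{min}})^{-1}\big\|_2$ and the contour integral $\oint_\Gamma$ appearing in the statement. Combining the scalar sinc-quadrature error bound with the resolvent bound along $\Gamma$, and using submultiplicativity of the spectral norm together with $\|\sbigotimes_i v_i\|_2 = \prod_i \|v_i\|_2 = \|b\|_2$, yields the stated bound after collecting constants into $C_{st}$. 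I would lean on (\cite{MR2081823}, proof of the analogous approximation lemma) for the precise form of the scalar quadrature error and merely adapt the spectral hypothesis from the left-half-plane case to the strip $\Lambda \subseteq \mathbb{C}_{+_i}$.

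The main obstacle I anticipate is verifying that, under the strip hypothesis $\sigma(A)\subset \Lambda$, the integrand remains analytic and uniformly bounded in a sufficiently wide complex strip around the real axis after the substitution — i.e., that the double-exponential change of variables does not push the transformed spectrum outside the region of analyticity — and tracking how the width $\lambda$ of the real part of the spectrum enters the exponential rate; a secondary technical point is justifying the interchange of the (infinite) quadrature-type sum, the contour integral, and the norm, which requires the resolvent to be integrable along $\Gamma$ and the tail of the quadrature to be summable. These are exactly the places where the constant $C_{st}$ absorbs the uglier bookkeeping, so I would state them carefully but defer the routine estimates to the cited reference.
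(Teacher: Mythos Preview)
Your proposal is essentially correct and aligned with what the paper does, though you supply considerably more detail than the paper itself. In fact, the paper does not give a self-contained proof of this lemma at all: it merely remarks, just before the statement, that if $\sigma(A)\subset\mathbb{C}_{+_i}$ then $\sigma(iA)$ lies in the left half plane, so that one can apply directly the theory of \cite{MR2081823} to the rotated system $iAx=ib$. Your outline---integral representation from the previous lemma, tensor factorisation of the integrand, Stenger sinc quadrature after rescaling by $2/\mu_{\text{min}}$, and a resolvent/Dunford contour to extract the error constant---is precisely a sketch of how the argument in \cite{MR2081823} runs, adapted to the rotated spectral hypothesis. So your approach and the paper's are the same in spirit; you simply unpack what the paper leaves as a citation.
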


We also need the Sherman-Morrison-Woodbury formula \cite{MR35118,MR40068}.

\begin{lemma}\label{lem:lem4} Let $X \in \mathbb{C}^{n\times n}$  be nonsingular and let $Y, Z \in \mathbb{C}^{n\times m} $, with $m \leq n$. Then $X+YZ^T$ is invertible if and only if $I_m + Z^TX^{-1}Y$ is invertible  and
\begin{equation}
 (X+YZ^T)^{-1} = X^{-1} - X^{-1}Y(I_m + Z^TX^{-1}Y )^{-1} Z^T X^{-1}.
\label{eqn:sherman}
\end{equation} 
\end{lemma}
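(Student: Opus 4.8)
The plan is to introduce the bordered block matrix
\[
M := \begin{pmatrix} X & -Y \\ Z^T & I_m \end{pmatrix} \in \mathbb{C}^{(n+m)\times(n+m)}
\]
and to exploit the two block‑triangular factorizations it admits. Since $X$ is nonsingular we may eliminate the $(2,1)$ block using $X$, obtaining
\[
M = \begin{pmatrix} I_n & 0 \\ Z^T X^{-1} & I_m \end{pmatrix}\begin{pmatrix} X & -Y \\ 0 & I_m + Z^T X^{-1} Y \end{pmatrix},
\]
and, eliminating the $(1,2)$ block using $I_m$,
\[
M = \begin{pmatrix} I_n & -Y \\ 0 & I_m \end{pmatrix}\begin{pmatrix} X + YZ^T & 0 \\ Z^T & I_m \end{pmatrix}.
\]
Each identity is checked by a single block multiplication, being careful only that the rectangular blocks $Y$ and $Z^T$ and the two identities $I_n$, $I_m$ are combined conformably.

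First I would deduce the equivalence. In both factorizations the leftmost factor is unit block‑triangular, hence has determinant $1$; taking determinants gives $\det M = \det(X)\,\det(I_m + Z^T X^{-1} Y)$ from the first factorization and $\det M = \det(X + YZ^T)$ from the second. Since $\det X \neq 0$, the first shows $M$ is invertible if and only if $I_m + Z^T X^{-1} Y$ is invertible, while the second shows $M$ is invertible if and only if $X + YZ^T$ is invertible. Chaining these two statements yields the claimed "if and only if".

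For the inverse formula I would assume both matrices are invertible (which by the previous step is a single hypothesis) and compute $M^{-1}$ from each factorization, then compare the $(1,1)$ blocks. Inverting the two triangular factors of the first factorization and multiplying gives, with $S := I_m + Z^T X^{-1} Y$,
\[
M^{-1} = \begin{pmatrix} X^{-1} - X^{-1} Y S^{-1} Z^T X^{-1} & \ast \\ \ast & \ast \end{pmatrix},
\]
whereas the same procedure applied to the second factorization yields $(X + YZ^T)^{-1}$ in the $(1,1)$ position. Since $M^{-1}$ is one fixed matrix, its $(1,1)$ block is unambiguous, and equating the two expressions is exactly (\ref{eqn:sherman}).

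There is no genuine obstacle here: the entire argument is routine $2\times 2$ block arithmetic, and the only thing requiring care is bookkeeping the non‑square blocks so that every product and inverse is well defined. As an alternative that avoids the auxiliary matrix $M$, one could instead verify (\ref{eqn:sherman}) directly — assuming $S$ invertible, multiply $X + YZ^T$ on the right by the proposed inverse, group the terms containing $Z^T X^{-1} Y$, and use $Z^T X^{-1} Y = S - I_m$ to telescope the expression to $I_n$; checking the product on the left as well shows it is a two‑sided inverse, and the remaining implication (invertibility of $X + YZ^T$ forcing invertibility of $S$) then follows from the determinant identity $\det(X + YZ^T) = \det(X)\,\det(S)$ noted above.
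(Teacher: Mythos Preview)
Your argument is correct: both block factorizations of the auxiliary matrix $M$ check out, the determinant comparison yields the biconditional, and reading off the $(1,1)$ block of $M^{-1}$ from each factorization gives the Woodbury identity. The alternative direct verification you sketch at the end is also fine.

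As for the comparison: the paper does not actually prove this lemma. It is stated as the Sherman--Morrison--Woodbury formula with references \cite{MR35118,MR40068} and then used in the proof of the subsequent theorem, so there is no ``paper's own proof'' to compare against. What you have written is one of the standard textbook derivations (the Schur complement / bordered matrix argument), and it would serve perfectly well as a self-contained proof in place of the citation.
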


Using Lemmas \ref{lem:lem3} and \ref{lem:lem4} we state the main result. To this end, we split the system matrix (\ref{eqn:mainmatrix}) as follows
\begin{equation}
 \mathcal{A} = L + \sum_{i=1}^{N}G_i \otimes K_i,
 \label{eqn:matrixl}
\end{equation}
where $L=G_0 \otimes K_0$. If $K_0$ has eigenvalues in the upper half complex plane, then $L$ has eigenvalues in the upper half complex plane as well when $G_0$ is the identity matrix.   Note that  symmetry for the matrices $K_i$, $i=0,...,N$, is slightly spoiled as a nonstandard finite element method is used. Let the stochastic matrices $G_i$, $i=1,...,N$ be decomposed as
\begin{equation}
 G_i:= U_iV_i^T, \quad U_i, V_i \in \mathbb{C}^{(P+1)\times r_i}, \ i=1,...,N, \quad r_i << P+1.
 \label{eqn:matrixg}
\end{equation}
Furthermore, let the stiffness matrices $K_i$, $i=1,...,N$ be decomposed as 
\begin{equation}
 K_i:= \tilde{U}_i \tilde{V}_i^T, \quad \tilde{U}_i, \tilde{V}_i \in \mathbb{C}^{J\times J}, \ i=1,...,N.
 \label{eqn:matrixklov}
\end{equation}
The following result holds. 
 
\begin{thm}
Let $\mathcal{A}$ be a matrix of Kronecker product structure as in (\ref{eqn:mainmatrix}). Assume that the spectrum of $L$ in (\ref{eqn:matrixl}) is contained in the strip  $\Lambda:= [-\lambda, \lambda]\otimes i[\mu_{\text{min}},\mu_{\text{max}}] \subseteq \mathbb{C}_{+_i}$ and let $\Gamma$ be the boundary of $[1,2\mu_{\text{max}}/\mu_{\text{min}}+1]$. Let $G_i$ and $K_i$, $i=1,...,N$, have the low-rank representation as in (\ref{eqn:matrixg}) and (\ref{eqn:matrixklov}), respectively. Suppose also that $U=[U_1\otimes \tilde{U}_1,...,U_N\otimes \tilde{U}_N]$ and   
$V=[V_1\otimes \tilde{V}_1,...,V_N\otimes \tilde{V}_N]$. Let the tensor rank of $\mathbf{b} \leq \ell$, where $\ell \ll J(P+1)$. Then, for $k\in \mathbb{N}$, the solution  $\mathbf{u}$  of (\ref{eqn:mainsystem})  can be approximated by
a vector $\tilde{\mathbf{u}}$  of the form
\begin{equation}
\tilde{\mathbf{u}} = -\sum_{j=-k}^{k} \frac{2w_j}{\mu_{\text{min}}} \left( e^{G_0} \otimes e^{-\frac{2t_j}{\mu_{\text{min}}}\tilde{K}_0}  \right) (\mathbf{b} -U\mathbf{y})
\label{eqn:approximateu}
\end{equation}
where the vector $\mathbf{y}\in \mathbb{C}^{J.r}$ is the solution of
\begin{equation}
 (I_{J.r} + V^T L^{-1}U)\mathbf{y} = V^TL^{-1}\mathbf{b} 
\end{equation}
and $t_j$ , $w_j$ are the quadrature weights and points as given by (\ref{eqn:tj}) and (\ref{eqn:wj}). The
corresponding approximation error is given by
\begin{equation}
 ||\mathbf{u}-\tilde{\mathbf{u}}||_2 \leq \frac{C_{st}}{\pi \mu_{\text{min}} } e^{(\pi^{-1}- \pi \sqrt{k}  )} |\Gamma|||\mathbf{b}- U\mathbf{y}||_2.
 \label{eqn:errorlow}
\end{equation}
\end{thm}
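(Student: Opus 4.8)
The plan is to strip the low-rank perturbation $\sum_{i=1}^{N}G_i\otimes K_i$ off $\mathcal A$ with the Sherman--Morrison--Woodbury identity, which reduces everything to the action of $L^{-1}$, and then to replace $L^{-1}$ by the sinc-quadrature exponential sum of Lemma~\ref{lem:lem3}. First I would put the perturbation into factored form: by the mixed-product rule \eqref{eqn:kroncarp} together with the factorizations \eqref{eqn:matrixg} and \eqref{eqn:matrixklov}, each summand is $G_i\otimes K_i=(U_iV_i^{T})\otimes(\tilde U_i\tilde V_i^{T})=(U_i\otimes\tilde U_i)(V_i\otimes\tilde V_i)^{T}$, so $\sum_{i=1}^{N}G_i\otimes K_i=UV^{T}$ with $U,V$ the block matrices in the statement (so $V^{T}L^{-1}U$ is $Jr\times Jr$ with $r:=\sum_{i=1}^{N}r_i$) and $\mathcal A=L+UV^{T}$. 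Since $\sigma(L)\subseteq\Lambda\subseteq\mathbb{C}_{+_i}$ we have $0\notin\sigma(L)$, hence $L$ is invertible, and $\mathcal A$ is invertible because \eqref{eqn:mainsystem} has a unique solution; Lemma~\ref{lem:lem4} then makes $I_{Jr}+V^{T}L^{-1}U$ invertible and \eqref{eqn:sherman} gives
\[\mathbf u=\mathcal A^{-1}\mathbf b=L^{-1}\mathbf b-L^{-1}U\,(I_{Jr}+V^{T}L^{-1}U)^{-1}V^{T}L^{-1}\mathbf b=L^{-1}(\mathbf b-U\mathbf y),\]
where $\mathbf y$ solves $(I_{Jr}+V^{T}L^{-1}U)\mathbf y=V^{T}L^{-1}\mathbf b$. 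This identity is exact; approximation enters only in the next step.

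Next I would approximate $L^{-1}\mathbf r$ with $\mathbf r:=\mathbf b-U\mathbf y$. Because the polynomial chaos basis is orthonormal, $G_0=I_{P+1}$, so $L=I_{P+1}\otimes K_0$ has the tensor structure \eqref{eqn:matrixtensor} and its exponential factors as $e^{iLt}=I_{P+1}\otimes e^{iK_0 t}$; combining this with the representation of Lemma~\ref{lemma:inverse}, $L^{-1}=-i\int_{0}^{\infty}e^{iLt}\,dt$, and the Stenger sinc quadrature after rescaling $\sigma(L)$ into the $\mu_{\mathrm{min}}$-normalized strip produces exactly the Kronecker-factored quadrature \eqref{eqn:approximateu} with nodes $t_j$ and weights $w_j$ from \eqref{eqn:tj}--\eqref{eqn:wj}. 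Lemma~\ref{lem:lem3} is stated for a rank-one right-hand side, but $\mathbf r$ has tensor rank at most $\ell+r$ — the extra $r$ coming from $U\mathbf y=\sum_i(U_i\otimes\tilde U_i)\mathbf y_i$, whose $i$-th term matricizes to a matrix of rank $\le r_i$ — so I would apply Lemma~\ref{lem:lem3} to each rank-one component of $\mathbf r$ and recombine by linearity, since both $L^{-1}$ and the quadrature operator in \eqref{eqn:approximateu} are linear. The rank-one hypothesis there is needed only to keep $\tilde{\mathbf u}$ in low-rank form, which it does.

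For the error, the map $\mathbf r\mapsto L^{-1}\mathbf r-\tilde{\mathbf u}$ is linear and the bound \eqref{eqn:lowrankerr} of Lemma~\ref{lem:lem3} depends on the right-hand side only through $\|\mathbf r\|_2$, the remaining factor being built from the contour integral $\oint_{\Gamma}\|(\mu I-2L/\mu_{\mathrm{min}})^{-1}\|_2\,d_{\Gamma}\mu$. Estimating this by $|\Gamma|\,\max_{\mu\in\Gamma}\|(\mu I-2L/\mu_{\mathrm{min}})^{-1}\|_2$, using that the contour $\Gamma$ in the statement keeps the rescaled spectrum away from the singularity of the integrand so that this maximum is $O(1)$, and simplifying the exponent from $\exp\!\big((2\lambda\mu_{\mathrm{min}}^{-1}+1)/\pi-\pi\sqrt{2k}\big)$ to $\exp(\pi^{-1}-\pi\sqrt k)$, collapses the $j$-sum and the integral and yields \eqref{eqn:errorlow} with $\|\mathbf b-U\mathbf y\|_2$ on the right.

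The Kronecker bookkeeping, the appeal to Lemma~\ref{lem:lem4}, and the rank count for $\mathbf b-U\mathbf y$ are routine. The hard part will be the error estimate: carrying Lemma~\ref{lem:lem3}'s bound through the rescaling $L\mapsto 2L/\mu_{\mathrm{min}}$ and the rotation that places $\sigma(L)$ where the integral representation of Lemma~\ref{lemma:inverse} is valid, and securing a resolvent bound on $\Gamma$ that is uniform and, crucially, independent of $k$, so that the contour integral and the quadrature sum genuinely reduce to the clean constant $\frac{C_{st}}{\pi\mu_{\mathrm{min}}}\,e^{(\pi^{-1}-\pi\sqrt k)}\,|\Gamma|$ appearing in \eqref{eqn:errorlow}.
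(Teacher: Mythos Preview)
Your proposal is correct and follows exactly the paper's proof: factor the perturbation as $UV^{T}$ via the mixed-product rule, apply Sherman--Morrison--Woodbury (Lemma~\ref{lem:lem4}) to reduce $\mathcal A^{-1}\mathbf b$ to $L^{-1}(\mathbf b-U\mathbf y)$, and then invoke Lemma~\ref{lem:lem3} together with the Kronecker factorization of $e^{-\beta L}$ to obtain \eqref{eqn:approximateu} and \eqref{eqn:errorlow}. You are more explicit than the paper about the rank of $\mathbf b-U\mathbf y$, the role of $G_0=I_{P+1}$ in factoring the exponential, and the passage from the contour-integral bound \eqref{eqn:lowrankerr} to the simplified constant in \eqref{eqn:errorlow}, but the line of argument is identical.
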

\begin{proof}  Using  (\ref{eqn:kroncarp}), (\ref{eqn:matrixg}) and (\ref{eqn:matrixklov}), we have  the low-rank representation
\begin{equation}
 \sum_{i=1}^{N}G_i \otimes K_i = \sum_{i=1}^N (U_i V_i^T)(\tilde{U}_i \tilde{V}_i^T) = \sum_{i=1}^N (U_i \otimes \tilde{U_i})(V_i^T \otimes \tilde{V_i}^T) = UV^T.
\label{eqn:prooflow} 
\end{equation}
Hence, from Lemma \ref{lem:lem4}, (\ref{eqn:matrixl}) and (\ref{eqn:prooflow}), we have that 
\begin{equation}
 \mathcal{A}^{-1} =(L+UV^T)^{-1} = L^{-1} - L^{-1}U(I_{J.r}+V^TL^{-1}U)^{-1}V^TL^{-1}, 
\end{equation} 
so that 
\begin{equation}
 \mathbf{u} = \mathcal{A}^{-1} \mathbf{b} =  L^{-1} (\mathbf{b} - U\mathbf{y}),
 \label{eqn:proof51}
\end{equation}
where $\mathbf{y} = (I_{J.r}+V^TL^{-1}U)^{-1}V^TL^{-1} \mathbf{b}$. By assumption, the matrix $L$ has eigenvalues in the upper half   complex plane.  Thus, using the fact that 
\begin{equation}
 e^{-\beta L} = e^{-\beta (G_0\otimes K_0)} = e^{ G_0\otimes (-\beta K_0)} = e^{ G_0}\otimes e^{-\beta K_0}, 
\end{equation}
where $\beta:= 2t_j/\mu_{\text{min}}$, together with (\ref{eqn:proof51}) and Lemma \ref{lem:lem3}, immediately yields (\ref{eqn:approximateu}) and (\ref{eqn:errorlow}). 
\end{proof}

Note that  $\mu_{\text{min}}^{-1}$ appears as a factor in (\ref{eqn:approximateu}), (\ref{eqn:errorlow}) which means that one needs to use more quadrature points as $\mu_{\text{min}}$ get closer to zero to keep the error fixed. This corresponds to using more singular values in a low-rank algorithm. Hence, we expect to see more singular values to be used in  our low-rank algorithm for the stochastic Helmholtz problem as the wave number $c$ increases.

\section{Computation of low-rank approximations} \label{sec:lowrank}

The  matrix $\mathcal{A}$ in (\ref{eqn:mainsystem}) is nonsymmetric and indefinite. Moreover, it is generally ill-conditioned with respect to the stochastic and spatial discretization parameters, e.g. the finite element mesh size, wave number, the length $N$ of the
random vector $\xi$, or the total degree of the multivarite stochastic basis polynomials $\{\psi_i \}$, \cite{MR2491431}.  A natural iterative solver for the system is a preconditioned BICG method \cite{MR0461857} as the matrix is complex-valued. GMRES requires more storage than BICG, hence GMRES is not considered here.  The   CGS and the BICGSTAB methods are other candidates but our numerical experiments suggest that the BICG method is  better in terms of number of iterations. We do not provide numerical experiments for CGS and BICGSTAB in this work.  Since the size of   $\mathcal{A}$ is very large, we  use low-rank preconditioned BICG which does not require construction of $\mathcal{A}$ explicitly. Another important point is the choice of the preconditioner and  its implementation in a low-rank algorithm. In what follows, we examine these points.

\normalfont
\subsection{Preconditioning} 

In a standard Krylov subspace method, matrix vector multiplication, inner products of vectors and  vector addition are very cheap in comparison with the application of a preconditioner. In a low-rank algorithm, obtaining the singular values with, for example, Matlab's \textit{svd}, is an extra operation, which might not be so cheap. One can accelerate a low-rank algorithm, for example, by computing the SVD  in a more efficient way, which is an ongoing work. Moreover, depending on the problem, matrix vector multiplication might not be cheap enough if it is not defined appropriately.   On the other hand, it is possible to further accelerate a low-rank algorithm by applying the preconditioner in a more efficient way. In this regard, the structure of the preconditioner matrix $\mathbf{M}$ is very important. In this article, we will apply the precondioner in the low-rank BICG method in a suitable way. While we  are preserving the low-rank structure of the approximate solution, we  apply the preconditioner  more efficiently, so that we  obtain a  gain in CPU time.

Although different preconditioners are available for the discretizations of the stochastic PDEs , we use  the mean-based preconditioner, \cite{Ghanem1996NumericalSO}, for our purpose, which is  given by
\begin{equation}
 \mathbf{M}_0 := G_0 \otimes K_0.
\end{equation}
Note that $G_0$ is a diagonal matrix as the stochastic basis functions $\{\psi_i \}$ are orthogonal. $K_0$ is indefinite for large wave numbers and hence, $\mathbf{M}_0$ is also indefinite and $\mathbf{M}_0^{-1} = G_0^{-1}\otimes K_0^{-1}$, where $G_0^{-1}(i,i)=1/G_0(i,i)$.  While Matlab uses backslash for  $\mathbf{M}_0$ to implement the preconditioner for Krylov methods, the diagonal block structure of $\mathbf{M}_0$ allows for  implementation of the preconditioner by solving $K_0x_k=r_k$ for $k=1,...,Q$, as done in \cite{MR3376791}, where an algebraic multigrid method is used as a preconditioner for $K_0$ when solving a stochastic diffusion problem. However, this approach is not an advantage of a low-rank method, because one has to turn back to full-rank format for the solution in either matrix form or vector form.   As we will see later, the most time consuming step in a plain Krylov method is the implementation of the preconditioner. In order to do a fair comparison between a low-rank method and a full-rank method, we should implement the preconditioner in such a way that it must be appropriate for a low-rank method. To this end, we propose a general approach for the discretizations of the stochastic PDEs using low-rank techniques to implement the preconditioner such that it accelerates  a low-rank method.
Note that the vectors in a low-rank algorithm are in the form of product of two matrices such as $\text{vec}^{-1}(x)=X_uX_v^T=[x_1,...,x_Q]$ and $\text{vec}^{-1}(r)=R_uR_v^T=[r_1,...,r_Q]$.
To implement the preconditioner, i.e., to solve $\mathbf{M}_0x=r$, we calculate the inverse of $K_0$ once, then we  set
\begin{equation}
  X_u=K_0^{-1}R_u \quad  \text{and} \quad  X_v=R_v.
 \label{eqn:peocondimplement}
\end{equation}

The inverse of $K_0$ is a full matrix and this requires more memory. However, compared to the nonzero entries of $\mathcal{A}$, there is still a big advantage in terms of memory usage. When the rank $k$ is small, the product $K_0^{-1}R_u$ is very cheap. We will show in numerical tests that even for large rank $k$, it is still faster than standard  approaches. One of the  advantages is that it does not require another preconditioner for $K_0$, which may be difficult to find.   
Moreover, the implementation of the preconditioner is very simple  this way.

\subsection{Preconditioned low-rank iterative solvers}

Having discussed  the  preconditioner, we now proceed in this section to show the implementation of the preconditioned low-rank BICG (P-LRBICG) algorithm.   
We present P-LRBICG in Algorithm \ref{alg:algorithm1}.

\begin{algorithm}[h]
\SetAlgoLined
\KwData{Matrix functions $\mathcal{A}, \mathbf{M}: \mathbb{R}^{n\times m} \rightarrow \mathbb{R}^{n\times m} $, right hand side $B \in \mathbb{R}^{n\times m}$, $\tilde{R}_0 \in \mathbb{R}^{n\times m}$  in low-rank format (e.g.,$\tilde{R}_0 = B$). Truncation operator $\mathcal{T}$ w.r.t. relative accuracy $\epsilon_\text{rel}$.}
\KwResult{Matrix $X \in \mathbb{R}^{n\times m} $ satisfying $||\mathcal{A}X- B||_F \leq tol$.}
 Initialization; $X_0=0$, $R_0=B$, , $\tilde{R}_0=R_0$, $k=0$ \;
 \While{$||R_k||_F > tol$}{
  $Z_k = \mathbf{M}^{-1}R_k$  \;
  $\tilde{Z}_k =(\mathbf{M}^H)^{-1}\tilde{R}_k$ \;
  $\rho_1=\rho$ \;
  $\rho = \langle \tilde{R}_k, Z_k   \rangle$  \;
  \textbf{If} $k=0$ \textbf{then} \; $P_k=Z_k$, $\tilde{P}_k=\tilde{Z}_k$ \; \textbf{else} \; $\beta=\rho_0/\rho_1$\; 
   $P_k = Z_k+\beta P_k$    \qquad \quad    $P_k\leftarrow \mathcal{T}(P_k)$ \;
   $\tilde{P}_k = \tilde{Z}_k + \bar{\beta}\tilde{P}_k$  \qquad \quad  $\tilde{P}_k\leftarrow \mathcal{T}(\tilde{P}_k)$ \;
   \textbf{end if} \;
  $Q_k = \mathcal{A}P_k$  \;
  $\tilde{Q}_k = \mathcal{A}^H\tilde{P}_k$  \;  
  $ptq = \langle \tilde{P}_k,Q_k  \rangle $  \;
  $\alpha = \rho/ptq$  \;
  $X_k = X_k + \alpha P_k$   \qquad \quad    $X_k\leftarrow \mathcal{T}(X_k)$  \;
  $R_k = R_k-\alpha Q_k$   \qquad \quad    $R_k\leftarrow \mathcal{T}(R_k)$  \;
  $\tilde{R}_k = \tilde{R}_k - \bar{\alpha} \tilde{Q}_k $  \qquad \quad  $\tilde{R}_k\leftarrow \mathcal{T}(\tilde{R}_k)$  \;
    $k = k+1  $
 }
 \caption{Preconditioned low-rank BiCG method (P-LRBICG)}
 \label{alg:algorithm1}
\end{algorithm}

We clarify the operations in the P-LRBICG algorithm. We have already discussed the implementation of the preconditioner (see Eqn: \ref{eqn:peocondimplement} ). Matrix vector multiplication is defined in \cite{ MR3376791,ciloglu2020stochastic}.
\begin{equation*}
 \mathcal{A}\text{vec}(X) = \sum_{i=1}^N \sum_{j=1}^k \left(G_i v_i  \right) \otimes \left(K_i v_i  \right) \in \mathbb{R}^{J.(P+1)\times 1}
\end{equation*}
where 
\begin{equation*}
 X = UV^T, \quad U\in \mathbb{R}^{J\times k}, \quad V\in \mathbb{R}^{(P+1)\times k}, \quad k \ll J,P,
\end{equation*}
\begin{equation*}
 U = [u_1,...,u_k ], \quad V = [v_1,...,v_k].
\end{equation*}
Although this approach is effective for small rank, it gets more inefficient for larger  rank as it uses two inner loops. Here, we do the matrix vector multiplication in a different and more efficient way.

Using the property of Kronecker product $(A^T\otimes B)\text{vec}(X) =\text{vec}(AXB)$, we perform the matrix vector multiplication.
\begin{align}
 \mathcal{A}\text{vec}(X) = \left(\sum_{i=0}^N (G_i \otimes K_i)\right)\text{vec}(X) = \sum_{i=0}^N (G_i \otimes K_i)\text{vec}(X) = \text{vec}\left(\sum_{i=0}^N  K_i X G_i \right)
 \label{eqn:matrixvecmultip}
\end{align}
where $X=U_{old}V_{old}^T$. We then set $U_{new}=\sum_{i=0}^N  K_i X G_i$ and $V_{new}=I$.

Vector addition in this low-rank algorithm  amounts to writing matrices of low-rank side by side \cite{MR3296628,MR3759421}. The truncation is done by the \textit{svds} function of Matlab.

Note that this implementation of LRBICG naturally leads to an inexact BICG method. For more details on inexact Krylov solvers, we refer to the literature \cite{MR1974182,MR2327109}.

\section{Numerical experiments} \label{sec:numerical}

In this section, we report some numerical results to show the performance of the low-rank approach presented in this paper applied to (\ref{eqn:stocmain}). We choose academic problems rather than real life applications in order to be consistent with the theory proposed in Section 3 and to show the performance of the each operations in Algorithm 1. We expect to see similar results for more advanced applications. We choose $f=0$, $g(x)=0.1$ and the spatial domain $D$ as illustraed in Figure \ref{fig:2ddomain}. Uniform triangular elements are used for the discretization of the domain using  $N_p$ points both in $x-$ and $y-$directions. 
\begin{figure}[h]
\center
 \includegraphics[width=6.5cm]{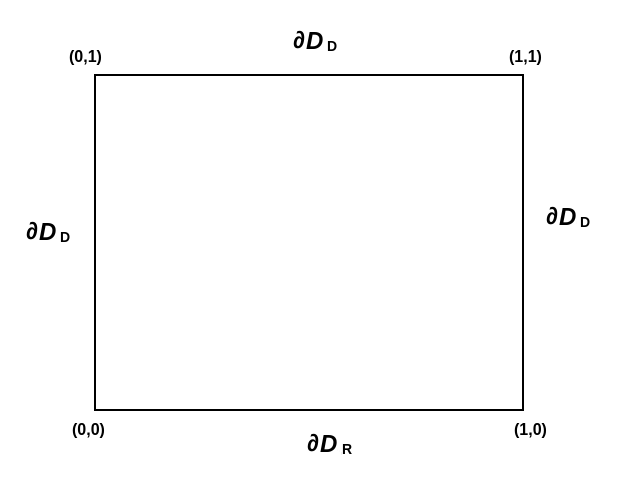} 
\caption{The spatial domain $D$. }  
\label{fig:2ddomain} 
\end{figure} 
The eigenpairs ($v_{k}, \phi_{k} $) of the KL-expansion of $\alpha$ are obtained by reordering the $v_{i,j}$ and $\phi_{i,j}$ which are given explicitly in \cite{MR3308418}:
\begin{equation*}
 v_{i,j} = \frac{1}{4} e^{-\pi(i^2+j^2)l^2}, \qquad \phi_{i,j} = 2\cos(i\pi x) \cos(j\pi y) \qquad i,j\geq 1.
\end{equation*}
Moreover, we set $\bar{\alpha}=5$.  In the numerical experiment, we set $l=1$ and investigate the behavior of the P-LRBICG for
different values of the discretization parameters $P(N, Q, N_{p},c)$.
Moreover, we choose $\xi=[\xi_1,...,\xi_N  ]$, $\xi_j \sim \mathbf{U}[-\sqrt{3},\sqrt{3}]$, and and $N$-dimensional Legendre polynomials with support in $[-\sqrt{3},\sqrt{3}]^N$ (uniform distribution). We choose $\psi_1=1$ so that $G_0$ is the identity matrix.  We do not consider  other polynomials, because the main difficulty arises from the finite element matrices $K_i$. We expect to see similar results when other polynomials are used. 
   
 A Linux machine with 16 GB RAM, Intel core i5 cpu and 
 MATLAB$^{\circledR}$  9.8 (R2020a) was used to perform the numerical experiments.
The stopping criterion for all numerical experiments for both P-LRBICG and BICG was $10^{-4}$ and the tolerance for the truncation operator in all cases was $10^{-6}$. We calculate all  singular values, and then cut off certain singular values according to the tolerance. We note that it is possible to further decrease the CPU time for the truncation operator by using different approaches \cite{palitta}, particularly for the positive definite case.

 In the numerical experiments, we report the  CPU time for the  P-LRBICG method and for the plain BICG method, the number of iterations for convergence, the matrix sizes, the
 CPU time for the operations in the P-LRBICG method and the CPU time of the preconditioner in the plain BICG method, the average  number of singular values used per iteration  and the sum of the nonzero entries of the matrices (nnz). The nonzero entries of the inverse matrix $K_0$ are included for P-LRBICG method.

\subsection{Test 1 (positive definite case) } We first test preconditioned-LRBICG  when $c=0$  for which the system matrix $A$ in (\ref{eqn:mainmatrix}) is 
positive definite. Altough the conjugate gradient method is more efficient in this case, our aim is to asses the performance of the preconditioner when 
it is implemented as discussed before. Matlab uses backslash to implement the preconditioner in a plain Krylov method. We compare the 
CPU time of preconditioner for P-LRBICG  and plain P-BICG in Table \ref{table:1} for varying $N_p$. 

Observe that the CPU time of the preconditioner for the low-rank approach is much smaller than for the standard approach. Almost all of the total CPU time of the plain BICG approach stems from the preconditioner. The most expensive operation in the low-rank approach is application of the truncation operator. The trace operator and matrix vector multiplication are quite cheap in comparison with the truncation operator and the precondioner. The total CPU time for the low-rank approach is much smaller than for the standard approach.  We note that the gain in terms of CPU time using low-rank approach increases  as the size of the system matrix increases but the gain in terms of storage decreases. However, there is still a big advantage in terms of storage for the low rank solver over full rank solver.

\begin{table}[h!]
\begin{center}
\begin{tabular}{ |p{3.1cm}|p{1.7cm}|p{1.5cm}| p{3.5cm} | p{3.5cm}| } 
\hline
    & P-LRBICG \ \ \ \ \ \ \  Iterations & P-BICG \ \ \ \ \ \ \ \ \ Iterations & P-LRBICG CPU time \& average number of singular values  & P-BICG CPU time \\ 
\hline
$P(6,4,11,0)$ \ \ \ \ \ \  (size$(\mathcal{A})=18480$) & 3 & 3 & \textbf{Total CPU time = 0.67}   \ \ \ \ \ \ \ \ Preconditioner = 0.1\% \ Truncation = 47.7\% \ \ \ \ \ \ \ \ \ Trace operator = 16.4\% \  Mrx. vec. mltp. = 22.4\% \ \ \ \ \ \ \ \ \ \ \ \ \  Avr. \# of singular val. = 18.7,  \  nnz = 10,928  & \textbf{Total CPU time = 0.53} \ \ Preconditioner = 79.2\%  \  nnz = 776,832\\ 
\hline
 $P(6,4,21,0)$  \ \ \ \ \ \  (size$(\mathcal{A})=79380$) & 3 & 3 &  \textbf{Total CPU time = 2.57}   \ \ Preconditioner = 6.2\% \ Truncation = 61.4\% \ \ \ \ \ \ \ \ \ Trace operator = 9.3\% \ \ Mrx. vec. mltp.  = 15.7\% \ Avr. \# singular val. = 18.7, \ nnz = 153,868   & \textbf{Total CPU time = 5.9} \ \ Preconditioner = 95.4\%  \  nnz = 3,561,432  \\ 
 \hline
 $P(6,4,41,0)$  \ \ \ \ \ \  (size$(\mathcal{A})=327180$) & 3 &  3  & \textbf{Total CPU time = 10.5}   \ \ Preconditioner = 21.3\% \ Truncation = 47.3\% \ \ \ \ \ \ \ \ \ Trace operator = 10.6\% \ \ Mrx. vec. mltp.  = 16.3\% \ Avr. \# singular val. = 18.0, \  nnz = 2,470,748   & \textbf{Total CPU time = 28.3} \ \ Preconditioner = 96.9\% \  nnz = 15,128,232 \\ 
 \hline
 $P(6,4,61,0)$  \ \ \ \ \ \  (size$(\mathcal{A})=742980$) & 3 &  3   & \textbf{Total CPU time = 30.7}   \ \ Preconditioner = 37.6\% \ Truncation = 39.5\% \ \ \ \ \ \ \ \ \ Trace operator = 7.8\% \ \ Mrx. vec. mltp.  = 13.0\% \ Avr. \# singular val. = 17.3,  nnz = 12,615,628   & \textbf{Total CPU time = 72.0} \ \ Preconditioner = 96.7\% \  nnz = 34,691,832  \\ 
\hline
 $P(6,4,81,0)$  \ \ \ \ \ \  (size$(\mathcal{A})=1326780$)  & 3 &  3   & \textbf{Total CPU time = 73.5}   \ \ \ Preconditioner = 52.7\% \ Truncation = 29.1\% \ \ \ \ \ \ \ \ \ Trace operator = 6.1\% \ \ Mrx. vec. mltp.  = 10.2\% \ Avr. \# singular val. = 16.3,  nnz = 40,092,508   & \textbf{Total CPU time = 136.9} \ \ Preconditioner = 97.4\% \ nnz = 62,252,232
 \\ 
\hline 
\end{tabular}
\caption{Results of Test 1 showing total CPU times, total iterations, average number of singular values used per iteration and CPU times for different operations   from
preconditioned low-rank BICG (P-LRBICG) solver  compared with
those from plain preconditioned BICG (P-BICG) for selected parameters $P(N, Q, N_{p},c)$. size$(\mathcal{A})$ represents the size of the matrix $\mathcal{A}$.   }
\label{table:1}
\end{center}
\end{table}

 \subsection{Test 2 (indefinite case) }

 We now increase the wave number gradually and observe the  behavior of the P-LRBICG method and compare with the plain BICG method. We first show the eigenvalues of the matrix $L=\mathbf{M}_0$ in Figure \ref{fig:eigenvalues} for selected problem parameters to be consistent  with the existence theory proposed in Section \ref{sec:existence}. Note that  eigenvalues of $L$ are the same as the eigenvalues of $K_0$ as $G_0$ is an identity matrix. Observe that all eigenvalues are in the upper half complex plane which is a requirement for the  existence theory of low-rank solutions, and more eigenvalues lie in the second quadrant as the wave number $c$ increases.

While we report simulations result in Table \ref{table:2} for the problem  $P(6,4,41,c)$ for increasing $c$, we change the stochastic parameters in Table   \ref{table:3} and reports results for the problem  $P(5,3,41,c)$ for increasing $c$. In Table \ref{table:4}, we further increase the values of the stochastic parameters and consider the problem $P(7,5,21,c)$ .  In all cases, more singular values are used per iteration as the wave number $c$ increases. This increases cost of  all operations in a low-rank approach. However, it is still possible to get advantage in total CPU time for large wave numbers. Observe that the implementation of the preconditioner  is still much more efficient  for the low-rank approach than in  the plain BICG method. For the problem $P(7,5,21,c)$, the implementation of the preconditioner is relatively cheap, less than $2\%$ of the total CPU time.  There is also an advantage in terms of storage.    One  point is that we do not see a blow up in CPU time for the matrix vector multiplication due to the new definition in (\ref{eqn:matrixvecmultip}). Otherwise, it could be more expensive than applying the truncation operator. The number of iterations required for the convergence increases as the wave number increases which is consistent with the observation in Figure  \ref{fig:eigenvalues} which shows  that the eigenvalue distribution spreads further for large wave numbers.

\begin{figure}[h!]
\center
 \includegraphics[width=8.0cm]{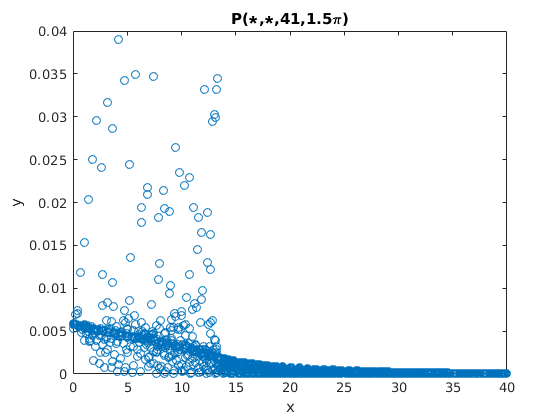}
 \includegraphics[width=8.0cm]{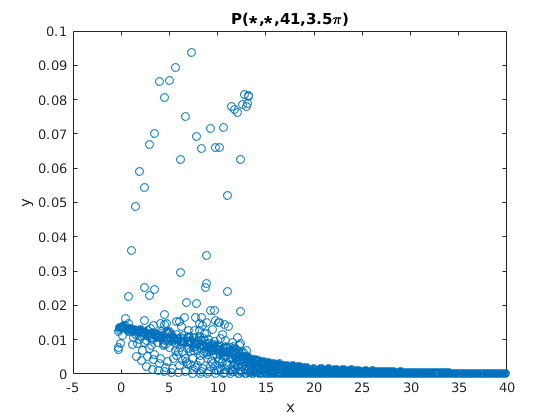}
 \includegraphics[width=8.0cm]{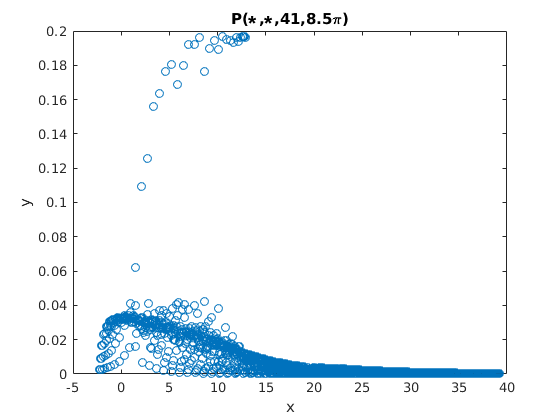}
\caption{Eigenvalues of the matrix $K_0$ for different values of the wave number $c$.} 
\label{fig:eigenvalues}
\end{figure}

\begin{table}[h!] 
\begin{center}
\begin{tabular}{ |p{3.1cm}|p{1.7cm}|p{1.5cm}| p{3.5cm} | p{3.5cm}| } 
\hline
    & P-LRBICG \ \ \ \ \ \ \  Iterations & P-BICG \ \ \ \ \ \ \ \ \ Iterations & P-LRBICG CPU time \& average number of singular values  & P-BICG CPU time \\ 
\hline
$P(6,4,41,1.5\pi)$ \ \ \ \ \ \  (size$(\mathcal{A})=327180$) & 10 & 10 & \textbf{Total CPU time = 39.0}   \ \ Preconditioner = 30\% \ Truncation = 42.5\% \ \ \ \ \ \ \ \ \ Trace operator = 10.5\% \ \ Mrx. vec. mltp.  = 13.5\% \ Avr. \# of singular val. = 36.72, \  nnz = 2,470,748   & \textbf{Total CPU time = 78.9} \ \ Preconditioner = 97.1\% \  nnz = 15,128,232 \\ 
\hline
 $P(6,4,41,3.5\pi)$  \ \ \ \ \ \  (size$(\mathcal{A})=327180$) & 21 & 21 &  \textbf{Total CPU time = 87.8}   \ \ Preconditioner = 31.8\% \ Truncation = 42.5\% \ \ \ \ \ \ \ \ \ Trace operator = 10.5\% \ \ Mrx. vec. mltp.  = 12.8\% \ Avr. \# singular val. = 48.8, \ nnz = 2,470,748   & \textbf{Total CPU time = 162.9} \ \ Preconditioner = 97.4\% \ nnz = 15,128,232 \\ 
 \hline
 $P(6,4,41,8.5\pi)$  \ \ \ \ \ \  (size$(\mathcal{A})=327180$) & 127 &  127  & \textbf{Total CPU time = 815.6}   \ \ Preconditioner = 37.4\% \ Truncation = 38.1\% \ \ \ \ \ \ \ \ \ Trace operator = 10.8\% \ \ Mrx. vec. mltp.  = 10.4\% \ Avr. \# singular val. = 98.0, \  nnz = 2,470,748   & \textbf{Total CPU time = 877.5} \ \ Preconditioner = 97.6\% \ nnz = 15,128,232\\ 
 \hline
\end{tabular}
\caption{Results of Test 2  showing total CPU times, total iterations, average number of singular values used per iteration and CPU times for different operations   from
preconditioned low-rank BICG (P-LRBICG) solver  compared with
those from plain preconditioned BICG (P-BICG) for selected parameters $P(N, Q, N_{p},c)$. size$(\mathcal{A})$ represents the size of the matrix $\mathcal{A}$.}
\label{table:2}
\end{center}
\end{table}

 \begin{table}
 \begin{center}
\begin{tabular}{ |p{3.1cm}|p{1.7cm}|p{1.5cm}| p{3.5cm} | p{3.4cm}| } 
\hline
    & P-LRBICG \ \ \ \ \ \ \  Iterations & P-BICG \ \ \ \ \ \ \ \ \ Iterations & P-LRBICG CPU time \& average number of singular values  & P-BICG CPU time \\ 
\hline
$P(5,3,41,1.5\pi)$ \ \ \ \ \ \  (size$(\mathcal{A})=327180$) & 8 & 8 & \textbf{Total CPU time = 11.9}   \ \ Preconditioner = 68.6\% \ Truncation = 17.3\% \ \ \ \ \ \ \ \ \ Trace operator = 3.6\% \ \ Mrx. vec. mltp. = 6.3\% \ Avr. \# of singular val. = 28.8, \  nnz = 2,459,356   & \textbf{Total CPU time = 17.5} \ \ Preconditioner = 96.0\% \  nnz = 3,411,268  \\ 
\hline
 $P(5,3,41,3.5\pi)$  \ \ \ \ \ \  (size$(\mathcal{A})=327180$) & 17 & 17 &  \textbf{Total CPU time = 25.8}   \ \ Preconditioner = 68.5\% \ Truncation = 17.9\% \ \ \ \ \ \ \ \ \ Trace operator = 4.3\% \ \ Mrx. vec. mltp.  = 6.9\% \ Avr. \# singular val. = 39.2, \  nnz = 2,459,356   & \textbf{Total CPU time = 33.7} \ \ Preconditioner = 97.4\% \  nnz = 3,411,268 \\ 
 \hline
 $P(5,3,41,8.5\pi)$  \ \ \ \ \ \  (size$(\mathcal{A})=327180$) & 67 &  63  & \textbf{Total CPU time = 105.8}   \ \ Preconditioner = 68.4\% \ Truncation = 18.2\% \ \ \ \ \ \ \ \ \ Trace operator = 4.4\% \ \ Mrx. vec. mltp.  = 6.3\% \ Avr. \# singular val. = 49.2, \  nnz = 2,459,356   & \textbf{Total CPU time = 134.1} \ \ Preconditioner = 97.2\% \  nnz = 3,411,268  \\ 
 \hline
\end{tabular}
\caption{Results of Test 2 showing total CPU times, total iterations, average number of singular values used per iteration and CPU times for different operations   from
preconditioned low-rank BICG (P-LRBICG) solver  compared with
those from plain preconditioned BICG (P-BICG) for selected parameters $P(N, Q, N_{p},c)$. size$(\mathcal{A})$ represents the size of the matrix $\mathcal{A}$.}
\label{table:3}
\end{center}
\end{table}

  \begin{table}[h!]
 \begin{center}
\begin{tabular}{ |p{3.1cm}|p{1.7cm}|p{1.5cm}| p{3.5cm} | p{3.4cm}| } 
\hline
    & P-LRBICG \ \ \ \ \ \ \  Iterations & P-BICG \ \ \ \ \ \ \ \ \ Iterations & P-LRBICG CPU time \& average number of singular values  & P-BICG CPU time \\ 
\hline
$P(7,5,21,1.5\pi)$ \ \ \ \ \ \  (size$(\mathcal{A})=299376$) & 12 & 12 & \textbf{Total CPU time = 60.2}   \ \ Preconditioner = 1.4\% \ Truncation = 59.8\% \ \ \ \ \ \ \ \ \ Trace operator = 19.7\% \ \ Mrx. vec. mltp. = 16.5\% \ Avr. \# of singular val. = 35.2, \  nnz = 159,974   & \textbf{Total CPU time = 74.5} \ \ Preconditioner = 96.4\% \  nnz = 15,472,776
  \\ 
\hline
 $P(7,5,21,3.5\pi)$  \ \ \ \ \ \  (size$(\mathcal{A})=299376$) & 40 & 40 &  \textbf{Total CPU time = 213.7}   \ \ Preconditioner = 1.5\% \ Truncation = 59.8\% \ \ \ \ \ \ \ \ \ Trace operator = 20.6\% \ \ Mrx. vec. mltp.  = 15.7\% \ Avr. \# singular val. = 53.8, \  nnz = 159,974   & \textbf{Total CPU time = 245.7} \ \ Preconditioner = 96.8\% \  nnz = 15,472,776
  \\ 
 \hline
 $P(7,5,21,8.5\pi)$  \ \ \ \ \ \  (size$(\mathcal{A})=299376$) & 133 &  133  & \textbf{Total CPU time = 915.8}   \ \ Preconditioner = 1.8\% \ Truncation = 61.9\% \ \ \ \ \ \ \ \ \ Trace operator =19.8\% \ \ Mrx. vec. mltp.  = 13.9\% \ Avr. \# singular val. = 91.8, \  nnz = 159,974   & \textbf{Total CPU time = 780.1} \ \ Preconditioner = 96.9\% \  nnz = 15,472,776
 \\ 
 \hline
\end{tabular}
\caption{Results of Test 2 showing total CPU times, total iterations, average number of singular values used per iteration and CPU times for different operations   from
preconditioned low-rank BICG (P-LRBICG) solver  compared with
those from plain preconditioned BICG (P-BICG) for selected parameters $P(N, Q, N_{p},c)$. size$(\mathcal{A})$ represents the size of the matrix $\mathcal{A}$.}
\label{table:4}
\end{center}
\end{table}

\section{Conclusion} \label{sec:conclusion} 
In this article, we considered the approximate solution to the stochastic Helmholtz problem using a low-rank approach.
An advantage of a low-rank approach is that it does does not require construction of the system matrix due to the Kronecker structure of the matrix. We have shown that when the operations in a low-rank approach are applied appropriately, it is possible to get an advantage in terms of CPU time. In this paper, we solved the indefinite system matrix (\ref{eqn:mainsystem}) using a preconditioned low-rank BICG method. We proposed a general approach to implement the preconditioner cheaply for the discretizations of the stochastic PDEs. Indefiniteness in the system matrix naturally leads to further difficulties. The total number of iterations required for the convergence increases, more singular values are used and hence  the low-rank operations become more expansive. However, even for indefinite problems, it is possible to save CPU time compared to the full rank approach. The application of the preconditioner increases memory requirements, however there is still a big advantage in terms of memory usage compared to the full rank approach.

\bibliographystyle{plain} 
\bibliography{ref}

\end{document}